\newtheorem{thm}{Theorem}[section]
\newtheorem{lem}[thm]{Lemma}
\theoremstyle{definition}
\theoremstyle{remark}
\theoremstyle{plain}
\theoremstyle{remark}
\newtheorem*{example}{Example}
\numberwithin{equation}{section}
\begin{document}

\title{ Enumeration of some particular   n-times  Persymmetric  Matrices over $\mathbb{F}_{2} $ by rank}
\author{Jorgen~Cherly}
\address{D\'epartement de Math\'ematiques, Universit\'e de
    Brest, 29238 Brest cedex~3, France}
\email{Jorgen.Cherly@univ-brest.fr}
\email{andersen69@wanadoo.fr}

\maketitle 
\begin{abstract}
Dans cet article nous comptons le nombre de certaines  n-fois matrices persym\' etriques de rang i sur $ \mathbb {F} _ {2} . $

 \end{abstract}

\selectlanguage{english}

\begin{abstract}
In this paper we count the number of some particular  n-times persymmetric rank i matrices over  $ \mathbb{F}_{2}.$
 \end{abstract}
 
 \maketitle 
\newpage
\tableofcontents
\newpage
   \section{Some notations concerning the field of Laurent Series $ \mathbb{F}_{2}((T^{-1})) $ }
  \label{sec 1}  
  We denote by $ \mathbb{F}_{2}\big(\big({T^{-1}}\big) \big)
 = \mathbb{K} $ the completion
 of the field $\mathbb{F}_{2}(T), $  the field of  rational fonctions over the
 finite field\; $\mathbb{F}_{2}$,\; for the  infinity  valuation \;
 $ \mathfrak{v}=\mathfrak{v}_{\infty }$ \;defined by \;
 $ \mathfrak{v}\big(\frac{A}{B}\big) = degB -degA $ \;
 for each pair (A,B) of non-zero polynomials.
 Then every element non-zero t in
  $\mathbb{F}_{2}\big(\big({\frac{1}{T}}\big) \big) $
 can be expanded in a unique way in a convergent Laurent series
                              $  t = \sum_{j= -\infty }^{-\mathfrak{v}(t)}t_{j}T^j
                                 \; where\; t_{j}\in \mathbb{F}_{2}. $\\
  We associate to the infinity valuation\; $\mathfrak{v}= \mathfrak{v}_{\infty }$
   the absolute value \; $\vert \cdot \vert_{\infty} $\; defined by \;
  \begin{equation*}
  \vert t \vert_{\infty} =  \vert t \vert = 2^{-\mathfrak{v}(t)}. \\
\end{equation*}
    We denote  E the  Character of the additive locally compact group
$  \mathbb{F}_{2}\big(\big({\frac{1}{T}}\big) \big) $ defined by \\
\begin{equation*}
 E\big( \sum_{j= -\infty }^{-\mathfrak{v}(t)}t_{j}T^j\big)= \begin{cases}
 1 & \text{if      }   t_{-1}= 0, \\
  -1 & \text{if      }   t_{-1}= 1.
    \end{cases}
\end{equation*}
  We denote $\mathbb{P}$ the valuation ideal in $ \mathbb{K},$ also denoted the unit interval of  $\mathbb{K},$ i.e.
  the open ball of radius 1 about 0 or, alternatively, the set of all Laurent series 
   $$ \sum_{i\geq 1}\alpha _{i}T^{-i}\quad (\alpha _{i}\in  \mathbb{F}_{2} ) $$ and, for every rational
    integer j,  we denote by $\mathbb{P}_{j} $
     the  ideal $\left\{t \in \mathbb{K}|\; \mathfrak{v}(t) > j \right\}. $
     The sets\; $ \mathbb{P}_{j}$\; are compact subgroups  of the additive
     locally compact group \; $ \mathbb{K}. $\\
      All $ t \in \mathbb{F}_{2}\Big(\Big(\frac{1}{T}\Big)\Big) $ may be written in a unique way as
$ t = [t] + \left\{t\right\}, $ \;  $  [t] \in \mathbb{F}_{2}[T] ,
 \; \left\{t\right\}\in \mathbb{P}  ( =\mathbb{P}_{0}). $\\
 We denote by dt the Haar measure on \; $ \mathbb{K} $\; chosen so that \\
  $$ \int_{\mathbb{P}}dt = 1. $$\\
  
  $$ Let \quad
  (t_{1},t_{2},\ldots,t_{n} )
 =  \big( \sum_{j=-\infty}^{-\nu(t_{1})}\alpha _{j}^{(1)}T^{j},  \sum_{j=-\infty}^{-\nu(t_{2})}\alpha _{j}^{(2)}T^{j} ,\ldots, \sum_{j=-\infty}^{-\nu(t_{n})}\alpha _{j}^{(n)}T^{j}\big) \in  \mathbb{K}^{n}. $$ 
 We denote $\psi  $  the  Character on  $(\mathbb{K}^n, +) $ defined by \\
 \begin{align*}
  \psi \big( \sum_{j=-\infty}^{-\nu(t_{1})}\alpha _{j}^{(1)}T^{j},  \sum_{j=-\infty}^{-\nu(t_{2})}\alpha _{j}^{(2)}T^{j} ,\ldots, \sum_{j=-\infty}^{-\nu(t_{n})}\alpha _{j}^{(n)}T^{j}\big) & = E \big( \sum_{j=-\infty}^{-\nu(t_{1})}\alpha _{j}^{(1)}T^{j}\big) \cdot E\big( \sum_{j=-\infty}^{-\nu(t_{2})}\alpha _{j}^{(2)}T^{j}\big)\cdots E\big(  \sum_{j=-\infty}^{-\nu(t_{n})}\alpha _{j}^{(n)}T^{j}\big) \\
  & = 
    \begin{cases}
 1 & \text{if      }     \alpha _{-1}^{(1)} +    \alpha _{-1}^{(2)}  + \ldots +   \alpha _{-1}^{(n)}   = 0 \\
  -1 & \text{if      }    \alpha _{-1}^{(1)} +    \alpha _{-1}^{(2)}  + \ldots +   \alpha _{-1}^{(n)}   =1                                                                                                                          
    \end{cases}
  \end{align*}

   \section{Some results concerning  n-times persymmetric matrices over  $ \mathbb{F}_{2}$}
  \label{sec 2}  
     $$ Set\quad
  (t_{1},t_{2},\ldots,t_{n} )
 =  \big( \sum_{i\geq 1}\alpha _{i}^{(1)}T^{-i}, \sum_{i \geq 1}\alpha  _{i}^{(2)}T^{-i},\sum_{i \geq 1}\alpha _{i}^{(3)}T^{-i},\ldots,\sum_{i \geq 1}\alpha _{i}^{(n)}T^{-i}   \big) \in  \mathbb{P}^{n}. $$

     Denote by $D^{\left[2 \atop{\vdots \atop 2}\right]\times k}(t_{1},t_{2},\ldots,t_{n} ) $
    
    the following $2n \times k $ \;  n-times  persymmetric  matrix  over the finite field  $\mathbb{F}_{2} $ 
    
  \begin{displaymath}
   \left (  \begin{array} {cccccccc}
\alpha  _{1}^{(1)} & \alpha  _{2}^{(1)}  &   \alpha_{3}^{(1)} &   \alpha_{4}^{(1)} &   \alpha_{5}^{(1)} &  \alpha_{6}^{(1)}  & \ldots  &  \alpha_{k}^{(1)} \\
\alpha  _{2}^{(1)} & \alpha  _{3}^{(1)}  &   \alpha_{4}^{(1)} &   \alpha_{5}^{(1)} &   \alpha_{6}^{(1)} &  \alpha_{7}^{(1)} & \ldots  &  \alpha_{k+1}^{(1)} \\ 
\hline \\
\alpha  _{1}^{(2)} & \alpha  _{2}^{(2)}  &   \alpha_{3}^{(2)} &   \alpha_{4}^{(2)} &   \alpha_{5}^{(2)} &  \alpha_{6}^{(2)} & \ldots   &  \alpha_{k}^{(2)} \\
\alpha  _{2}^{(2)} & \alpha  _{3}^{(2)}  &   \alpha_{4}^{(2)} &   \alpha_{5}^{(2)}&   \alpha_{6}^{(2)} &  \alpha_{7}^{(2)}  & \ldots  &  \alpha_{k+1}^{(2)} \\ 
\hline\\
\alpha  _{1}^{(3)} & \alpha  _{2}^{(3)}  &   \alpha_{3}^{(3)}  &   \alpha_{4}^{(3)} &   \alpha_{5}^{(3)} &  \alpha_{6}^{(3)} & \ldots  &  \alpha_{k}^{(3)} \\
\alpha  _{2}^{(3)} & \alpha  _{3}^{(3)}  &   \alpha_{4}^{(3)}&   \alpha_{5}^{(3)} &   \alpha_{6}^{(3)}  &  \alpha_{7}^{(3)} & \ldots  &  \alpha_{k+1}^{(3)} \\ 
\hline \\
\vdots & \vdots & \vdots  & \vdots  & \vdots & \vdots  & \vdots & \vdots \\
\hline \\
\alpha  _{1}^{(n)} & \alpha  _{2}^{(n)}  &   \alpha_{3}^{(n)} &   \alpha_{4}^{(n)} &   \alpha_{5}^{(n)}  &  \alpha_{6}^{(n)} & \ldots  &  \alpha_{k}^{(n)} \\
\alpha  _{2}^{(n)} & \alpha  _{3}^{(n)}  &   \alpha_{4}^{(n)}&   \alpha_{5}^{(n)} &   \alpha_{6}^{(n)}  &  \alpha_{7}^{(n)} & \ldots  &  \alpha_{k+1}^{(n)} \\ 
\end{array} \right )  
\end{displaymath} 
We denote by  $ \Gamma_{i}^{\left[2\atop{\vdots \atop 2}\right]\times k}$  the number of rank i n-times persymmetric matrices over $\mathbb{F}_{2}$ of the above form :  \\

  Let $ \displaystyle  f (t_{1},t_{2},\ldots,t_{n} ) $  be the exponential sum  in $ \mathbb{P}^{n} $ defined by\\
    $(t_{1},t_{2},\ldots,t_{n} ) \displaystyle\in \mathbb{P}^{n}\longrightarrow \\
    \sum_{deg Y\leq k-1}\sum_{deg U_{1}\leq  1}E(t_{1} YU_{1})
  \sum_{deg U_{2} \leq 1}E(t _{2} YU_{2}) \ldots \sum_{deg U_{n} \leq 1} E(t _{n} YU_{n}). $\vspace{0.5 cm}\\
    Then
  $$     f_{k} (t_{1},t_{2},\ldots,t_{n} ) =
  2^{2n+k- rank\big[ D^{\left[2\atop{\vdots \atop 2}\right]\times k}(t_{1},t_{2},\ldots,t_{n} )\big] } $$

    Hence  the number denoted by $ R_{q}^{(k)} $ of solutions \\
  
 $(Y_1,U_{1}^{(1)},U_{2}^{(1)}, \ldots,U_{n}^{(1)}, Y_2,U_{1}^{(2)},U_{2}^{(2)}, 
\ldots,U_{n}^{(2)},\ldots  Y_q,U_{1}^{(q)},U_{2}^{(q)}, \ldots,U_{n}^{(q)}   ) \in (\mathbb{F}_{2}[T])^{(n+1)q}$ \vspace{0.5 cm}\\
 of the polynomial equations  \vspace{0.5 cm}
  \[\left\{\begin{array}{c}
 Y_{1}U_{1}^{(1)} + Y_{2}U_{1}^{(2)} + \ldots  + Y_{q}U_{1}^{(q)} = 0  \\
    Y_{1}U_{2}^{(1)} + Y_{2}U_{2}^{(2)} + \ldots  + Y_{q}U_{2}^{(q)} = 0\\
    \vdots \\
   Y_{1}U_{n}^{(1)} + Y_{2}U_{n}^{(2)} + \ldots  + Y_{q}U_{n}^{(q)} = 0 
 \end{array}\right.\]
 
    $ \Leftrightarrow
    \begin{pmatrix}
   U_{1}^{(1)} & U_{1}^{(2)} & \ldots  & U_{1}^{(q)} \\ 
      U_{2}^{(1)} & U_{2}^{(2)}  & \ldots  & U_{2}^{(q)}  \\
\vdots &   \vdots & \vdots &   \vdots   \\
U_{n}^{(1)} & U_{n}^{(2)}   & \ldots  & U_{n}^{(q)} \\
 \end{pmatrix}  \begin{pmatrix}
   Y_{1} \\
   Y_{2}\\
   \vdots \\
   Y_{q} \\
  \end{pmatrix} =   \begin{pmatrix}
  0 \\
  0 \\
  \vdots \\
  0 
  \end{pmatrix} $\\

    satisfying the degree conditions \\
                   $$  degY_i \leq k-1 ,
                   \quad degU_{j}^{(i)} \leq 1, \quad  for \quad 1\leq j\leq n  \quad 1\leq i \leq q $$ \\
  is equal to the following integral over the unit interval in $ \mathbb{K}^{n} $
    $$ \int_{\mathbb{P}^{n}} f_{k}^{q}(t_{1},t_{2},\ldots,t_{n}) dt_{1}dt _{2}\ldots dt _{n}. $$
  Observing that $ f (t_{1},t_{2},\ldots,t_{n} ) $ is constant on cosets of $ \prod_{j=1}^{n}\mathbb{P}_{k+1} $ in $ \mathbb{P}^{n} $\;
  the above integral is equal to 
  
  \begin{equation}
  \label{eq 2.1}
 2^{q(2n+k) - (k+1)n}\sum_{i = 0}^{k}
  \Gamma_{i}^{\left[2\atop{\vdots \atop 2}\right]\times k} 2^{-iq} =  R_{q}^{(k)} \quad \text{where} \; k\leqslant 2n
 \end{equation}
 
 \begin{eqnarray}
 \label{eq 2.2}
\text{ Recall that $ R_{q}^{(k)}$ is equal to the number of solutions of the polynomial system} \nonumber \\
    \begin{pmatrix}
   U_{1}^{(1)} & U_{1}^{(2)} & \ldots  & U_{1}^{(q)} \\ 
      U_{2}^{(1)} & U_{2}^{(2)}  & \ldots  & U_{2}^{(q)}  \\
\vdots &   \vdots & \vdots &   \vdots   \\
U_{n}^{(1)} & U_{n}^{(2)}   & \ldots  & U_{n}^{(q)} \\
 \end{pmatrix}  \begin{pmatrix}
   Y_{1} \\
   Y_{2}\\
   \vdots \\
   Y_{q} \\
  \end{pmatrix} =   \begin{pmatrix}
  0 \\
  0 \\
  \vdots \\
  0 
  \end{pmatrix} \\
 \text{ satisfying the degree conditions}\nonumber \\
                     degY_i \leq k-1 ,
                   \quad degU_{j}^{(i)} \leq 1, \quad  for \quad 1\leq j\leq n  \quad 1\leq i \leq q  \nonumber
 \end{eqnarray}

 From \eqref{eq 2.1} we obtain for q = 1\\
   \begin{align}
  \label{eq 2.3}
 2^{k-(k-1)n}\sum_{i = 0}^{k}
 \Gamma_{i}^{\left[2\atop{\vdots \atop 2}\right]\times k} 2^{-i} =  R_{1}^{(k)} = 2^{2n}+2^k-1
  \end{align}

We have obviously \\

   \begin{align}
  \label{eq 2.4}
 \sum_{i = 0}^{k}
 \Gamma_{i}^{\left[2\atop{\vdots \atop 2}\right]\times k}  = 2^{(k+1)n}  
 \end{align}

From  the fact that the number of rank one persymmetric  matrices over $\mathbb{F}_{2}$ is equal to three  we obtain using
 combinatorial methods  : \\
 
    \begin{equation}
  \label{eq 2.5}
 \Gamma_{1}^{\left[2\atop{\vdots \atop 2}\right]\times k}  = (2^{n}-1)\cdot 3
  \end{equation}
  For more details see Cherly  [9],[10]
     \section{Enumeration of some special matrices over $\mathbb{F}_{2}[T] $ by rank}
  \label{sec 3}  
 Denote by  $ r _{i}^{n\times 2} =  r _{i}$  for  $ i\in \{0,1,2\} $  the number of $n\times 2 $  rank i matrices over $  \mathbb{F}_{2}[T]  $
of the form : \\[0,2 cm]
 $ M= \begin{pmatrix}
   U_{1}^{(1)} & U_{1}^{(2)}\\    U_{2}^{(1)} & U_{2}^{(2)}\\
\vdots &   \vdots        \\
U_{n}^{(1)} & U_{n}^{(2)}\\
 \end{pmatrix} =  \begin{pmatrix}
  a_{1}+b_{1}T & c_{1}+d_{1}T \\  
    a_{2}+b_{2}T & c_{2}+d_{2}T \\
\vdots &   \vdots        \\
  a_{n}+b_{n}T & c_{n}+d_{n}T\\
  \end{pmatrix} ,$\\
   where $ deg U_{j}^{(i)} \leq 1 \quad  for \quad 1\leq j\leq n  \quad 1\leq i \leq 2 $ 
   that is  $  a_{j}, b_{j}, c_{j}, d_{j} \in  \mathbb{F}_{2} \; \text{for} \; 1\leqslant j \leqslant n$\\[0.2 cm]
 Obviously we have : $ r_{0} =1$  and  $ r_{0} + r_{1}+ r_{2} = 4^{2n}$ \\
  \textbf{To compute  $ r_{1} $ we proceed as follows }:\\

 The number of $n\times 2 $  rank 1 matrices over $  \mathbb{F}_{2}[T]  $
of the form : \\[0,2 cm]
  $ M =   \begin{pmatrix}
  0 & c_{1}+d_{1}T \\  
    0 & c_{2}+d_{2}T \\
\vdots &   \vdots        \\
  0 & c_{n}+d_{n}T\\
  \end{pmatrix} ,$
where    $   c_{j}, d_{j} \in  \mathbb{F}_{2} \; \text{for} \; 1\leqslant j \leqslant n $\\[0.2 cm]
is obviously  equal to $4^n-1$,\\[0.2 cm]
just as the number of $n\times 2 $  rank 1 matrices over $  \mathbb{F}_{2}[T]  $
of the form : \\[0,2 cm]
  $ M =   \begin{pmatrix}
  a_{1}+b_{1}T & 0 \\  
    a_{2}+b_{2}T & 0 \\
\vdots &   \vdots        \\
  a_{n}+b_{n}T & 0\\
  \end{pmatrix} ,$
where    $  a_{j}, b_{j} \in  \mathbb{F}_{2} \; \text{for} \; 1\leqslant j \leqslant n$\\[0.2 cm]
\\[0.2 cm]
and  the number of $n\times 2 $  rank 1 matrices over $  \mathbb{F}_{2}[T]  $
of the form : \\[0,2 cm]
  $ M =   \begin{pmatrix}
  a_{1}+b_{1}T & a_{1}+b_{1}T \\  
    a_{2}+b_{2}T & a_{2}+b_{2}T \\
\vdots &   \vdots        \\
  a_{n}+b_{n}T & a_{n}+b_{n}T\\
  \end{pmatrix} ,$
where    $  a_{j}, b_{j}\in  \mathbb{F}_{2} \; \text{for} \; 1\leqslant j \leqslant n$\\[0.2 cm]

Let M be one of the following  $n\times 2 $  rank 1 matrices over $  \mathbb{F}_{2}[T]  $ \\
\begin{align}
\label{eq 3.1}
    \begin{pmatrix}
  0 & c_{1}+d_{1}T \\  
    0 & c_{2}+d_{2}T \\
\vdots &   \vdots        \\
  0 & c_{n}+d_{n}T\\
  \end{pmatrix} ,
      \begin{pmatrix}
  a_{1}+b_{1}T & 0 \\  
    a_{2}+b_{2}T & 0 \\
\vdots &   \vdots        \\
  a_{n}+b_{n}T & 0\\
  \end{pmatrix} ,
     \begin{pmatrix}
  a_{1}+b_{1}T & a_{1}+b_{1}T \\  
    a_{2}+b_{2}T & a_{2}+b_{2}T \\
\vdots &   \vdots        \\
  a_{n}+b_{n}T & a_{n}+b_{n}T\\
  \end{pmatrix} ,
\end{align}

Thus the number of  $n\times 2 $  rank 1 matrices over $  \mathbb{F}_{2}[T]  $ of the form \eqref{eq 3.1} is equal to $3\cdot(2^{2n}-1)$

To  obtain  $ r_{1} $ it remains only to compute 
  the number of $n\times 2 $  matrices over $  \mathbb{F}_{2}[T]  $
of the form : \\[0,2 cm]
\begin{align}
  \label{eq 3.2}
    M =   \begin{pmatrix}
  a_{1}+b_{1}T & c_{1}+d_{1}T \\  
    a_{2}+b_{2}T & c_{2}+d_{2}T \\
\vdots &   \vdots        \\
  a_{n}+b_{n}T & c_{n}+d_{n}T\\
  \end{pmatrix} 
\text{where}  \begin{pmatrix}
  a_{1}+b_{1}T\\  
    a_{2}+b_{2}T  \\
\vdots      \\
  a_{n}+b_{n}T 
  \end{pmatrix} \neq  \begin{pmatrix}
 c_{1}+d_{1}T \\  
   c_{2}+d_{2}T \\
\vdots    \\
  c_{n}+d_{n}T\\
  \end{pmatrix} \\
  \text{and} \quad  rank M = rank  \begin{pmatrix}
  a_{1}+b_{1}T\\  
    a_{2}+b_{2}T  \\
\vdots      \\
  a_{n}+b_{n}T 
  \end{pmatrix} = rank  \begin{pmatrix}
 c_{1}+d_{1}T \\  
   c_{2}+d_{2}T \\
\vdots    \\
  c_{n}+d_{n}T\\
  \end{pmatrix} = 1 \nonumber
\end{align}

  Since the rank of the matrix 
$ \begin{pmatrix}
  a_{1}+b_{1}T\\  
    a_{2}+b_{2}T  \\
\vdots      \\
  a_{n}+b_{n}T 
  \end{pmatrix}$ is equal to one we can assume 
  that there exists $i\in[1,n] $ such that  $ a_{i}+b_{i}T \in \{1,T,1+T\} .$\\
  Let for instance  $ a_{i}+b_{i}T $ be equal to T.\\
  Then $\begin{vmatrix}
    T & c_{i}+d_{i}T \\  
    a_{j}+b_{j}T & c_{j}+d_{j}T 
\end{vmatrix} =  T( c_{j}+d_{j}T)+( a_{j}+b_{j}T)( c_{i}+d_{i}T)$ is equal to 0 for every $ j \neq i$ since rank M is equal to one.\\
If $ c_{i}+d_{i}T$  is equal to 0, we conclude that $ c_{j}+d_{j}T$ is equal to zero for every j,which contradict the fact that the rank of the matrix 
$   \begin{pmatrix}
 c_{1}+d_{1}T \\  
   c_{2}+d_{2}T \\
\vdots    \\
  c_{n}+d_{n}T\\
  \end{pmatrix}  $ is equal to one.\\
  We then deduce that  $ c_{i}+d_{i}T \in \{1,T,1+T\} .$\\
  Assume now that  $ c_{i}+d_{i}T$ is equal to T.\\
  Now $\begin{vmatrix}
    T & T \\  
    a_{j}+b_{j}T & c_{j}+d_{j}T 
\end{vmatrix} $ is equal to 0 for every $ j \neq i .$\\
We then conclude that  $   a_{j}+b_{j}T  = c_{j}+d_{j}T $ for every $ j \neq i,$ which contradicts the fact that 

$ \begin{pmatrix}
  a_{1}+b_{1}T\\  
    a_{2}+b_{2}T  \\
\vdots      \\
  a_{n}+b_{n}T 
  \end{pmatrix} \neq  \begin{pmatrix}
 c_{1}+d_{1}T \\  
   c_{2}+d_{2}T \\
\vdots    \\
  c_{n}+d_{n}T\\
  \end{pmatrix} $\\
  Hence we obtain that  $ c_{i}+d_{i}T \in \{1,1+T\} .$\\
 For instance let  $ c_{i}+d_{i}T $ be equal to 1+T, then 
   and as before $\begin{vmatrix}
    T & T+1 \\  
    a_{j}+b_{j}T & c_{j}+d_{j}T 
\end{vmatrix} $ is equal to 0 for every $ j \neq i.$\\
Thus $ ( a_{j}+b_{j}T , c_{j}+d_{j}T ) \in \{(0,0), (T,T+1)\} $ for every $ j \neq i.$ \\
Consider now the matrix M in the case $ a_{i}+b_{i}T = T$ and  $c_{i}+d_{i}T =1+T. $\\
Thus M is equal   $   \begin{pmatrix}
 a_{1}+b_{1}T &  c_{1}+d_{1}T               \\  
    a_{2}+b_{2}T &     c_{2}+d_{2}T     \\
\vdots  & \vdots    \\
T & 1+T \\  
  a_{i+1}+b_{i+1}T & c_{i+1}+d_{i+1}T \\
\vdots &   \vdots        \\
  a_{n}+b_{n}T & c_{n}+d_{n}T\\
  \end{pmatrix}, $\\ where $ ( a_{j}+b_{j}T , c_{j}+d_{j}T ) \in \{(0,0), (T,T+1)\} $ for every $ j \neq i.$ \\[0.2 cm]
  We have :
\begin{align}
  \label{eq 3.3}
  Card \bigg\{ M =   \begin{pmatrix}
  a_{1}+b_{1}T & c_{1}+d_{1}T \\  
    a_{2}+b_{2}T & c_{2}+d_{2}T \\
\vdots &   \vdots        \\
  a_{n}+b_{n}T & c_{n}+d_{n}T\\
  \end{pmatrix} \mid  ( a_{j}+b_{j}T , c_{j}+d_{j}T ) \in \{(0,0), (T,T+1)\}  \\
  for \;1\leqslant j \leqslant n \; and \;  M \neq (0) \bigg\} =  2^n-1\nonumber
\end{align}
  Obviously we get the same result if  in the formula \eqref{eq 3.3}  we replace the couple (T,T+1) by one of the following  couples (T+1,T), (1,T+1), (T+1,1), (1,T), (T,1)\\
   From the results above  we deduce that the number of matrices M of the form \eqref{eq 3.2} is equal to $ 6\cdot (2^n-1)$ \\
    Combining the above results  we get :\\
        \begin{equation}
        \label{eq 3.4}
    r_{i}  =  \begin{cases}
1 & \text{if  } i = 0,        \\
 3\cdot (2^{2n}-1) + 6\cdot (2^n-1)   & \text{if   } i=1,\\
  2^{4n} -3 \cdot 2^{2n}-6\cdot2^{n}+8   & \text{if   }  i=2 
    \end{cases}    
   \end{equation}
     \section{Computation of $R_{2}^{(k)}$}
  \label{sec 4}  
\begin{thm} 
\label{thm 4.1}
Let   $ R_{2}^{(k)} $ denote the number  of solutions \\
  $(Y_1,U_{1}^{(1)},U_{2}^{(1)}, \ldots,U_{n}^{(1)}, Y_2,U_{1}^{(2)},U_{2}^{(2)}, 
\ldots,U_{n}^{(2)})\in( \mathbb{F}_{2}[T])^{2n+2} ,$ \vspace{0.5 cm}\\
  of the polynomial equations  \vspace{0.2 cm}
 \[\left\{\begin{array}{c}
 Y_{1}U_{1}^{(1)} + Y_{2}U_{1}^{(2)} = 0  \\
    Y_{1}U_{2}^{(1)} + Y_{2}U_{2}^{(2)}  = 0\\
    \vdots \\
   Y_{1}U_{n}^{(1)} + Y_{2}U_{n}^{(2)}  = 0    \end{array}\right.\]
   $ \Leftrightarrow
    \begin{pmatrix}
   U_{1}^{(1)} & U_{1}^{(2)}  \\ 
      U_{2}^{(1)} & U_{2}^{(2)}   \\
\vdots &   \vdots    \\
U_{n}^{(1)} & U_{n}^{(2)}  \\
 \end{pmatrix}  \begin{pmatrix}
   Y_{1} \\
   Y_{2}
    \end{pmatrix} =   \begin{pmatrix}
  0 \\
  0 
   \end{pmatrix} $\\
    satisfying the degree conditions \\
                   $$  degY_i \leq k-1 ,
                   \quad degU_{j}^{(i)} \leq 1, \quad  for \quad 1\leq j\leq n  \quad 1\leq i \leq 2 $$ 
                   Then   $ R_{2}^{(k)} $   is equal to    \\
                    \begin{equation}
  \label{eq 4.1}
 \boxed{
  2^{2k}+ 3\cdot (2^{2n}-1)\cdot 2^k+6\cdot (2^{n}-1)\cdot 2^{k-1} + 2^{4n} -3 \cdot 2^{2n}-6\cdot2^{n}+8 }
\end{equation}
\end{thm}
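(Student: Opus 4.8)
The plan is to compute $R_2^{(k)}$ directly from its description in \eqref{eq 2.2} as the number of pairs $\big(\mathcal U,(Y_1,Y_2)\big)$, where $\mathcal U=\big(U^{(i)}_j\big)_{1\le j\le n,\,1\le i\le 2}$ runs over all $2^{4n}$ matrices with entries of degree $\le 1$ and $(Y_1,Y_2)$ runs over the box $\deg Y_i\le k-1$ subject to $\mathcal U\binom{Y_1}{Y_2}=0$. Writing $N(\mathcal U)$ for the number of admissible $(Y_1,Y_2)$ attached to a fixed $\mathcal U$, one has $R_2^{(k)}=\sum_{\mathcal U}N(\mathcal U)$, and I would evaluate this sum by splitting $\mathcal U$ according to its rank over $\mathbb{F}_2(T)$, reusing the enumeration \eqref{eq 3.4} of Section~\ref{sec 3}. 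Since $N(\mathcal U)$ depends on the \emph{shape} of $\mathcal U$ and not merely on its rank, the decomposition of the rank-one stratum into the two families \eqref{eq 3.1} and \eqref{eq 3.2} already found in Section~\ref{sec 3} is exactly what is needed.

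First I would dispose of the two extreme ranks. If $\operatorname{rank}\mathcal U=0$, i.e.\ $\mathcal U$ is the zero matrix (the unique such, $r_0=1$), then every pair in the box solves the system, so $N(\mathcal U)=2^{2k}$. If $\operatorname{rank}\mathcal U=2$ the system forces $Y_1=Y_2=0$, so $N(\mathcal U)=1$; by \eqref{eq 3.4} there are $r_2=2^{4n}-3\cdot2^{2n}-6\cdot2^n+8$ such matrices, contributing that same number.

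The only real computation is the rank-one stratum. For a matrix $\mathcal U$ of type \eqref{eq 3.1} one column vanishes or the two columns coincide, so $\mathcal U\binom{Y_1}{Y_2}=0$ is a single linear condition ($Y_1=0$, or $Y_2=0$, or $Y_1=Y_2$), which cuts the box down to a set of size $2^k$; since there are $3(2^{2n}-1)$ of them, they contribute $3(2^{2n}-1)2^k$. For a matrix $\mathcal U$ of type \eqref{eq 3.2}, the analysis preceding \eqref{eq 3.3} shows that, after permuting rows, every row is either $(0,0)$ or one fixed pair $(p,q)$ chosen from $(T,T+1),(T+1,T),(1,T+1),(T+1,1),(1,T),(T,1)$, with at least one row equal to $(p,q)$; in each case $\gcd(p,q)=1$ and $\max(\deg p,\deg q)=1$. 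The system then collapses to the single relation $pY_1+qY_2=0$, whose polynomial solutions are precisely $(Y_1,Y_2)=(qZ,pZ)$ with $Z\in\mathbb{F}_2[T]$, and both constraints $\deg Y_i\le k-1$ become $\deg Z\le k-2$, so $N(\mathcal U)=2^{k-1}$ (for $k\ge 1$). Since there are $6(2^n-1)$ such matrices, they contribute $6(2^n-1)2^{k-1}$.

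Adding the three contributions gives
\[
R_2^{(k)}=2^{2k}+3(2^{2n}-1)2^k+6(2^n-1)2^{k-1}+2^{4n}-3\cdot2^{2n}-6\cdot2^n+8,
\]
which is exactly \eqref{eq 4.1}. The step most deserving of care is the rank-one type \eqref{eq 3.2} case: one must pin down the primitive kernel generator $(q,p)$ of $\mathcal U$ and then carry the degree bookkeeping through the substitution $Y_1=qZ$, $Y_2=pZ$; everything else rests on the counts already established in Sections~\ref{sec 2} and~\ref{sec 3}. As a consistency check one may verify that this value of $R_2^{(k)}$ agrees with the identity $R_2^{(k)}=2^{2(2n+k)-(k+1)n}\sum_{i}\Gamma_i^{\left[2\atop{\vdots \atop 2}\right]\times k}2^{-2i}$ obtained by putting $q=2$ in \eqref{eq 2.1}.
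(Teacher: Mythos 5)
Your proposal is correct and follows essentially the same route as the paper: stratify the $n\times 2$ coefficient matrix by rank and shape (zero matrix; the three rank-one families of \eqref{eq 3.1}; the rank-one family \eqref{eq 3.2} with rows in $\{(0,0),(p,q)\}$ for one of the six coprime pairs, handled via the single equation $pY_1+qY_2=0$ giving $2^{k-1}$ solutions; and the rank-two matrices counted by \eqref{eq 3.4}), then sum the contributions. The paper's proof of Theorem \ref{thm 4.1} does exactly this, so no further comparison is needed.
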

\begin{proof}
\textbf{The case rank $  \begin{pmatrix}
   U_{1}^{(1)} & U_{1}^{(2)}\\    U_{2}^{(1)} & U_{2}^{(2)}\\
\vdots &   \vdots        \\
U_{n}^{(1)} & U_{n}^{(2)}\\
 \end{pmatrix} =0$}

  The number  of solutions  of the polynomial equations \\
 $   \begin{pmatrix}
  0 & 0\\  
  0 & 0\\
\vdots &   \vdots        \\
0& 0\\
 \end{pmatrix}  \begin{pmatrix}
   Y_{1} \\
   Y_{2}
  \end{pmatrix} =   \begin{pmatrix}
  0 \\
  0 \\
  \vdots \\
  0 
  \end{pmatrix} $\\

 satisfying the degree conditions \\
                   $$  degY_i \leq k-1 ,
                   \quad deg U_{j}^{(i)} = -\infty , \quad  for \quad 1\leq j\leq n  \quad 1\leq i \leq 2 $$ \\
  is obviously  equal to

  \begin{equation}
  \label{eq 4.2}
\boxed{2^k \cdot 2^k = 2^{2k}}
\end{equation}
  
\textbf{  The case rank $  \begin{pmatrix}
0   & U_{1}^{(2)}\\ 
     0 & U_{2}^{(2)}\\
\vdots &   \vdots        \\
0 & U_{n}^{(2)}\\
 \end{pmatrix} =1$}

  The number  of solutions  of the polynomial equations \\
                  
   $    \begin{pmatrix}
  0 & c_{1}+d_{1}T \\  
    0 & c_{2}+d_{2}T \\
\vdots &   \vdots        \\
  0 & c_{n}+d_{n}T\\
  \end{pmatrix}  \begin{pmatrix}
   Y_{1} \\
   Y_{2}
  \end{pmatrix} =   \begin{pmatrix}
  0 \\
  0 \\
  \vdots \\
  0 
  \end{pmatrix} $\\
  
  where  $ \begin{pmatrix}
  c_{1}+d_{1}T\\  
    c_{2}+d_{2}T  \\
\vdots      \\
  c_{n}+d_{n}T 
  \end{pmatrix} \neq  \begin{pmatrix}
0 \\  
   0 \\
\vdots    \\
  0\\
  \end{pmatrix} $\\

 satisfying the degree conditions \\
                   $$  degY_i \leq k-1 ,
                   \quad deg U_{j}^{(1)} = -\infty , \quad  deg U_{j}^{(2)} \leq 1 \quad  for \quad 1\leq j\leq n  \quad 1\leq i \leq 2 $$ \\

 is equal to 
   \begin{equation}
  \label{eq 4.3}
\boxed{(2^{2n}-1)\cdot 2^k }
\end{equation}

 \textbf{The case rank $  \begin{pmatrix}
   U_{1}^{(1)} & 0\\
       U_{2}^{(1)} & 0 \\
\vdots &   \vdots        \\
U_{n}^{(1)} &   0  \\
 \end{pmatrix} = 1$}

   The number  of solutions  of the polynomial equations \\
                  
   $    \begin{pmatrix}
 a_{1}+b_{1}T  &0 \\  
 a_{2}+b_{2}T     & 0\\
\vdots &   \vdots        \\
   a_{n}+b_{n}T & 0\\
  \end{pmatrix}  \begin{pmatrix}
   Y_{1} \\
   Y_{2}
  \end{pmatrix} =   \begin{pmatrix}
  0 \\
  0 \\
  \vdots \\
  0 
  \end{pmatrix} $\\
  
  where  $ \begin{pmatrix}
  a_{1}+b_{1}T\\  
    a_{2}+b_{2}T  \\
\vdots      \\
  a_{n}+b_{n}T 
  \end{pmatrix} \neq  \begin{pmatrix}
0 \\  
   0 \\
\vdots    \\
  0\\
  \end{pmatrix} $\\

 satisfying the degree conditions \\
                   $$  degY_i \leq 3 ,
                   \quad deg U_{j}^{(2)} = -\infty , \quad  deg U_{j}^{(1)} \leq 1 \quad  for \quad 1\leq j\leq n  \quad 1\leq i \leq 2 $$ \\

 is equal to 
    \begin{equation}
  \label{eq 4.4}
\boxed{(2^{2n}-1)\cdot 2^k }
\end{equation}

\textbf{The case rank $  \begin{pmatrix}
   U_{1}^{(1)} & U_{1}^{(1)}\\    U_{2}^{(1)} & U_{2}^{(1)}\\
\vdots &   \vdots        \\
U_{n}^{(1)} & U_{n}^{(1)}\\
 \end{pmatrix} = 1$}

   The number  of solutions  of the polynomial equations \\
                  
   $    \begin{pmatrix}
 a_{1}+b_{1}T  &  a_{1}+b_{1}T\\  
 a_{2}+b_{2}T     &   a_{2}+b_{2}T                 \\
\vdots &   \vdots        \\
   a_{n}+b_{n}T &  a_{n}+b_{n}T\\
  \end{pmatrix}  \begin{pmatrix}
   Y_{1} \\
   Y_{2}
  \end{pmatrix} =   \begin{pmatrix}
  0 \\
  0 \\
  \vdots \\
  0 
  \end{pmatrix} $\\
  
  where  $ \begin{pmatrix}
  a_{1}+b_{1}T\\  
    a_{2}+b_{2}T  \\
\vdots      \\
  a_{n}+b_{n}T 
  \end{pmatrix} \neq  \begin{pmatrix}
0 \\  
   0 \\
\vdots    \\
  0\\
  \end{pmatrix} $\\

 satisfying the degree conditions \\
                   $$  degY_i \leq k-1 ,
                   \quad  deg U_{j}^{(1)} \leq 1 \quad  for \quad 1\leq j\leq n  \quad 1\leq i \leq 2 $$ \\

 is equal to 
     \begin{equation}
  \label{eq 4.5}
\boxed{(2^{2n}-1)\cdot 2^k }
\end{equation}

\begin{equation}
 \label{eq 4.6}
  \textbf{The case rank}  
  \begin{pmatrix}
   U_{1}^{(1)} & U_{1}^{(2)}\\ 
      U_{2}^{(1)} & U_{2}^{(2)}\\
\vdots &   \vdots        \\
U_{n}^{(1)} & U_{n}^{(2)}\\
 \end{pmatrix} = 1
\end{equation}\\
\quad \text{where} $\quad  (U_{j}^{(1)},U_{j}^{2)}) \in \{(0,0),(T,T+1),(T+1,T),(1,T+1),(T+1,1),(T,1),(1,T)\}\text{for} \quad j \in [1,n] $
 See \eqref{eq 3.2}  and  \eqref{eq 3.3} refer to  Section \ref{sec 3}.\\
  Consider for instance the following case:\\
   The number  of solutions  of the polynomial equations \\
                    $    \begin{pmatrix}
 U_{1}^{(1)} & U_{1}^{(2)}\\ 
       \star    &   \star                \\ 
  \vdots &   \vdots        \\
    \star    &   \star                \\ 
  T  &  T+1 \\ 
   \star    &   \star                \\
\vdots &   \vdots        \\
 U_{n}^{(1)} & U_{n}^{(2)}   \\
  \end{pmatrix}  \begin{pmatrix}
   Y_{1} \\
   Y_{2}
  \end{pmatrix} =   \begin{pmatrix}
  0 \\
  0 \\
  \vdots \\
  0 
  \end{pmatrix} $\\
  satisfying the  conditions \\
                   $$  degY_i \leq k-1 ,
                   \quad (U_{j}^{(1)},U_{j}^{2)}) \in \{(0,0),(T,T+1)\}  \quad  for \quad 1\leq j\leq n,  \quad 1\leq i \leq 2 $$ is equal to  \\
          \begin{equation}
  \label{eq 4.7}
\boxed{ (2^{n}-1)\cdot 2^{k-1} }
\end{equation}
 Indeed the number  of solutions $(Y_{1},Y_{2}) \in ( \mathbb{F}_{2}[T] )^2 $ of the polynomial equation 
$ TY_{1}+(T+1)Y_{2} = 0 \quad \text{where}\quad  degY_i \leq k-1,\; i\in [1,2] $ is equal to $2^{k-1} $.\\
Apparently\\
\begin{align*}
 TY_{1}+(T+1)Y_{2} = 0 \Rightarrow T\mid Y_{2}, \quad T+1 \mid Y_{1} \\
  \Rightarrow   Y_{1} = (T+1)Y_{1}^{\star}, \quad  degY_{1}^{\star} \leq k-2 \quad
   \text{and} \quad  Y_{2} = TY_{2}^{\star}, \quad  degY_{2}^{\star} \leq k-2 \\
    \Rightarrow  T(T+1) Y_{1}^{\star}= T(T+1) Y_{2}^{\star} 
       \Rightarrow  Y_{1}^{\star}=Y_{2}^{\star} \quad  degY_{1}^{\star} \leq k-2 
\end{align*}
  Obviously if  we replace the couple (T,T+1) by one of the following five  couples (T+1,T), (1,T+1), (T+1,1), (1,T), (T,1)
  we obtain equally the formula  \eqref{eq 4.7} \\
  Thus the number of polynomial solutions in the case \eqref{eq 4.6} is equal to   
     \begin{equation}
  \label{eq 4.8}
\boxed{6\cdot (2^{n}-1)\cdot 2^{k-1} }
\end{equation}

\begin{equation}
 \label{eq 4.9}
  \textbf{The case rank}  
  \begin{pmatrix}
   U_{1}^{(1)} & U_{1}^{(2)}\\ 
      U_{2}^{(1)} & U_{2}^{(2)}\\
\vdots &   \vdots        \\
U_{n}^{(1)} & U_{n}^{(2)}\\
 \end{pmatrix} = 2
\end{equation}

  Thus using \eqref{eq 3.4} the number of polynomial solutions  \\ $  \begin{pmatrix}
   U_{1}^{(1)} & U_{1}^{(2)}  \\ 
      U_{2}^{(1)} & U_{2}^{(2)}   \\
\vdots &   \vdots    \\
U_{n}^{(1)} & U_{n}^{(2)}  \\
 \end{pmatrix}  \begin{pmatrix}
   Y_{1} \\
   Y_{2}
    \end{pmatrix} =   \begin{pmatrix}
  0 \\
  0 
   \end{pmatrix} $\\
in the case \eqref{eq 4.9} is equal to

     \begin{equation}
  \label{eq 4.10}
\boxed{  2^{4n} -3 \cdot 2^{2n}-6\cdot2^{n}+8 }
\end{equation}

So combining \eqref{eq 4.2}, \eqref{eq 4.3}, \eqref{eq 4.4} \eqref{eq 4.5} , \eqref{eq 4.8} and  \eqref{eq 4.10}  we obtain  \eqref{eq 4.1} in theorem 
 \ref{thm 4.1}

\end{proof}

\begin {example}

\textbf{Computation of  $ R_{2}^{(k)} $  in the case k=1}\\[0.1 cm]

  $ R_{2}^{(1)} $ is equal to the number of solutions $(a_{1},b_{1},c_{1},d_{1}, \ldots, a_{n},b_{n},c_{n},d_{n},\alpha, \beta )\in  \mathbb{F}_{2}^{4n+2} $ of the linear system :\\[0.1 cm]
 $   \begin{pmatrix}
   a_{1} & c_{1} \\ 
    b_{1}& d_{1} \\
\vdots &   \vdots   \\
    a_{n} & c_{n} \\ 
    b_{n}& d_{n} \\                                 
 \end{pmatrix} 
  \begin{pmatrix}
   \alpha\\
   \beta
  \end{pmatrix} =   \begin{pmatrix}
  0 \\
  0 \\
  \vdots \\
  0 
  \end{pmatrix} $\\
From \eqref{eq 4.1} with k=1 we obtain  $ R_{2}^{(1)} = 2^{4n}+3\cdot 2^{2n}$\\ 
Of course we can easily compute directly the number of solutions of the above linear system:\\
Indeed let M denote the above $2n\times 2$ matrix over $\mathbb{F}_{2} $ then (see  Fisher and Alexander [2] or Landsberg [1] )
  \begin{equation*}
       Card\{M \mid rank M=i\} =  \begin{cases}
1 & \text{if  } i = 0,        \\
   (2^{2n}-1)\cdot 3 & \text{if   } i=1,\\
2^{4n} - 3\cdot 2^{2n} +2 & \text{if   }  i=2 
    \end{cases}    
   \end{equation*}
Thus $ R_{2}^{(1)} = 4+ 2\cdot(2^{2n}-1)\cdot 3 +2^{4n} - 3\cdot 2^{2n} +2= 2^{4n}+3\cdot 2^{2n}$

\end{example}

\begin{example}
Combining \eqref{eq 2.1} with q=2 and \eqref{eq 4.1} we obtain for $ k\leqslant 2n $ :\\

  \begin{align}
  \label{eq 4.11}
 R_{2}^{(k)} =  2^{2(2n+k) - (k+1)n}\sum_{i = 0}^{k}
  \Gamma_{i}^{\left[2\atop{\vdots \atop 2}\right]\times k} 2^{-2i}  \\ 
   =  2^{2k}+ 3\cdot (2^{2n}-1)\cdot 2^k+6\cdot (2^{n}-1)\cdot 2^{k-1} + 2^{4n} -3 \cdot 2^{2n}-6\cdot2^{n}+8 \nonumber
 \end{align}

\end{example}
\newpage

   \section{Computation of the number  $ \Gamma_{i}^{\left[2\atop{\vdots \atop 2}\right]\times k} $   of   n-times persymmetric 
   $2n  \times k$  rank i matrices  for $1\leqslant k \leqslant 6 $}
  \label{sec 5}  
  
We observe that the $ \Gamma_{i}^{\left[2\atop{\vdots \atop 2}\right]\times k} $   where  $ 0\leq i\leq k$ (see Section \ref{sec 2})
are solutions to the system 
\begin{lem}
\label{lem 5.1}
 \begin{equation}
  \label{eq 5.1}
 \begin{cases} 
 \displaystyle  \Gamma_{0}^{\left[2\atop{\vdots \atop 2}\right]\times k}  = 1 \quad \text{if} \quad  k\geqslant 1 \\
\displaystyle  \Gamma_{1}^{\left[2\atop{\vdots \atop 2}\right]\times k}  = (2^{n}-1)\cdot 3 \quad \text{if} \quad  k\geqslant 2 \\
 \displaystyle  \sum_{i = 0}^{k} \Gamma_{i}^{\left[2\atop{\vdots \atop 2}\right]\times k}  = 2^{(k+1)n} \\ 
  \displaystyle  \sum_{i = 0}^{k} \Gamma_{i}^{\left[2\atop{\vdots \atop 2}\right]\times k} 2^{-i}  = 2^{n+k(n-1)}+2^{(k-1)n}-2^{(k-1)n-k}\\
  \displaystyle \sum_{i = 0}^{k} \Gamma_{i}^{\left[2\atop{\vdots \atop 2}\right]\times k} 2^{-2i}  =
   2^{n+k(n-2)}+2^{-n+k(n-2)}\cdot[3\cdot2^k-3] +2^{-2n+k(n-2)}\cdot[6\cdot2^{k-1}-6] \\
   +2^{-3n+kn}-6\cdot2^{n(k-3)-k}+8\cdot2^{-3n+k(n-2)}
\end{cases}
    \end{equation}
  \end{lem}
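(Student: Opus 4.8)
The plan is to assemble the five equations of Lemma \ref{lem 5.1} directly from the results already established in Sections \ref{sec 2}, \ref{sec 3} and \ref{sec 4}, since each of them is simply a transcription of an identity proved earlier. The first equation, $\Gamma_{0}^{\left[2\atop{\vdots \atop 2}\right]\times k}=1$ for $k\geqslant 1$, is immediate: a $2n\times k$ matrix has rank $0$ precisely when it is the zero matrix, so there is exactly one such persymmetric matrix. The second equation, $\Gamma_{1}^{\left[2\atop{\vdots \atop 2}\right]\times k}=3(2^{n}-1)$ for $k\geqslant 2$, is exactly \eqref{eq 2.5}. The third equation is exactly \eqref{eq 2.4}, expressing that the total number of these persymmetric matrices is $2^{(k+1)n}$, since the $\alpha_i^{(j)}$ for $1\leqslant j\leqslant n$, $1\leqslant i\leqslant k+1$ are free parameters in $\mathbb{F}_2$.

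Next I would derive the fourth equation from \eqref{eq 2.3}. Equation \eqref{eq 2.3} states $2^{k-(k-1)n}\sum_{i=0}^{k}\Gamma_{i}^{\left[2\atop{\vdots \atop 2}\right]\times k}2^{-i}=2^{2n}+2^{k}-1$; dividing through by $2^{k-(k-1)n}$ gives $\sum_{i=0}^{k}\Gamma_{i}^{\left[2\atop{\vdots \atop 2}\right]\times k}2^{-i}=2^{(k-1)n-k}(2^{2n}+2^{k}-1)$, and expanding the right-hand side yields $2^{(k-1)n-k+2n}+2^{(k-1)n-k+k}-2^{(k-1)n-k}=2^{(k+1)n-k}+2^{(k-1)n}-2^{(k-1)n-k}$. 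One then only needs to observe that $2^{(k+1)n-k}=2^{n+kn-k}=2^{n+k(n-1)}$ to match the stated form $2^{n+k(n-1)}+2^{(k-1)n}-2^{(k-1)n-k}$.

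For the fifth equation I would invoke \eqref{eq 4.11} (equivalently \eqref{eq 2.1} with $q=2$ combined with Theorem \ref{thm 4.1}). Equation \eqref{eq 4.11} reads $R_{2}^{(k)}=2^{2(2n+k)-(k+1)n}\sum_{i=0}^{k}\Gamma_{i}^{\left[2\atop{\vdots \atop 2}\right]\times k}2^{-2i}=2^{2k}+3(2^{2n}-1)2^{k}+6(2^{n}-1)2^{k-1}+2^{4n}-3\cdot2^{2n}-6\cdot2^{n}+8$. Solving for the sum, $\sum_{i=0}^{k}\Gamma_{i}^{\left[2\atop{\vdots \atop 2}\right]\times k}2^{-2i}=2^{(k+1)n-4n-2k}R_{2}^{(k)}=2^{n(k-3)-2k}R_{2}^{(k)}$, and I would then distribute the prefactor $2^{n(k-3)-2k}$ over the six terms of $R_{2}^{(k)}$ term by term. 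For instance $2^{n(k-3)-2k}\cdot 2^{2k}=2^{n(k-3)}=2^{n+k(n-2)}\cdot 2^{-3n+n}$... more cleanly, $2^{n(k-3)-2k}\cdot 2^{2k}=2^{nk-3n}=2^{n+k(n-2)-(n+k\cdot(-2)+nk-3n)}$; to avoid confusion I would simply compute each of the six products and check they coincide with the six summands in the displayed right-hand side of the fifth equation of \eqref{eq 5.1} ($2^{n+k(n-2)}$; $2^{-n+k(n-2)}[3\cdot2^{k}-3]$; $2^{-2n+k(n-2)}[6\cdot2^{k-1}-6]$; $2^{-3n+kn}$; $-6\cdot2^{n(k-3)-k}$; $8\cdot2^{-3n+k(n-2)}$). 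The only real work is this bookkeeping of exponents, and the main obstacle — if one can call it that — is simply keeping the arithmetic of the exponents straight across all six terms; there is no conceptual difficulty, as every ingredient is already proved. Thus all five equations hold, establishing Lemma \ref{lem 5.1}.
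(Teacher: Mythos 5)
Your proposal is correct and follows essentially the same route as the paper, whose entire proof of Lemma \ref{lem 5.1} is the one-line observation that it follows from \eqref{eq 2.5}, \eqref{eq 2.4}, \eqref{eq 2.3} and \eqref{eq 4.11}; you merely spell out the exponent bookkeeping the paper leaves implicit. The only caveat is cosmetic: after multiplying $R_{2}^{(k)}$ by $2^{n(k-3)-2k}$ the monomials do not match the paper's six summands one-to-one (the two contributions $-3\cdot 2^{n(k-3)-k}$ coming from the $3(2^{2n}-1)2^{k}$ and $6(2^{n}-1)2^{k-1}$ pieces combine into the single term $-6\cdot 2^{n(k-3)-k}$), but the verification you describe goes through.
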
  
\begin{proof}
Lemma \ref{lem 5.1} follows from \eqref{eq 2.5},\eqref{eq 2.4}, \eqref{eq 2.3} and \eqref{eq 4.11}.
\end{proof}

  \begin{lem}
\label{lem 5.2}
\textbf{The case k=1}
  \begin{equation*}
       \Gamma_{i}^{\left[2\atop{\vdots \atop 2}\right]\times 1} =  \begin{cases}
1 & \text{if  } i = 0,        \\
   2^{2n} -1& \text{if   } i=1.
 \end{cases}    
   \end{equation*}

\textbf{The case k=2}

  \begin{equation*}
       \Gamma_{i}^{\left[2\atop{\vdots \atop 2}\right]\times 2} =  \begin{cases}
1 & \text{if  } i = 0,        \\
   (2^{n}-1)\cdot 3 & \text{if   } i=1,\\
2^{  3n} - 3\cdot 2^{n} +2 & \text{if   }  i=2. 
    \end{cases}    
   \end{equation*}

\textbf{The case k=3}

    \begin{equation*}
       \Gamma_{i}^{\left[2\atop{\vdots \atop 2}\right]\times 3} =  \begin{cases}
1 & \text{if  } i = 0,        \\
   (2^{n}-1)\cdot 3 & \text{if   } i=1,\\
  7\cdot 2^{2n} -9\cdot2^{n}+2 & \text{if   }  i = 2,  \\
2^{  4n} - 7\cdot 2^{2n} +6\cdot2^{n}  & \text{if   }  i=3. 
    \end{cases}    
   \end{equation*}

\textbf{The case k=4}

       \begin{equation*}
      \Gamma_{i}^{\left[2\atop{\vdots \atop 2}\right]\times 4}=   \begin{cases}
1 & \text{if  } i = 0,        \\
 (2^{n}-1)\cdot 3 & \text{if   } i=1,\\
7\cdot2^{2n}+7\cdot2^{n}-14 & \text{if   }  i = 2,  \\
 15\cdot 2^{3n}-21\cdot 2^{2n}-42\cdot 2^{n}+48   & \text{if   }  i = 3,  \\
 2^{5n} -15\cdot 2^{3n}+7\cdot 2^{2n+1}+2^{n+5}-32 & \text{if   }  i=4. 
    \end{cases}    
   \end{equation*}
 \end{lem}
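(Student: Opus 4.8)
The quantities $\Gamma_{0}^{\left[2\atop{\vdots \atop 2}\right]\times k},\dots,\Gamma_{k}^{\left[2\atop{\vdots \atop 2}\right]\times k}$ satisfy, for each fixed $k$, the linear relations collected in Lemma~\ref{lem 5.1}, and the plan is simply to solve this system for $k=1,2,3,4$. The first two lines of \eqref{eq 5.1} already pin down $\Gamma_{0}^{\left[2\atop{\vdots \atop 2}\right]\times k}=1$ (valid for $k\geq 1$) and $\Gamma_{1}^{\left[2\atop{\vdots \atop 2}\right]\times k}=3(2^{n}-1)$ (valid for $k\geq 2$); after substituting these the number of remaining unknowns is $1$ when $k=1$ and $k-1$ when $k\in\{2,3,4\}$, while the three weighted-count identities in \eqref{eq 5.1} --- the plain sum ($=2^{(k+1)n}$, coming from \eqref{eq 2.4}), the $2^{-i}$-weighted sum (coming from \eqref{eq 2.3}) and the $2^{-2i}$-weighted sum (coming from \eqref{eq 4.11}) --- provide more than enough equations to determine what is left.

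Concretely: for $k=1$ one needs only $\Gamma_{0}=1$ and $\Gamma_{0}+\Gamma_{1}=2^{2n}$, so $\Gamma_{1}^{\left[2\atop{\vdots \atop 2}\right]\times 1}=2^{2n}-1$. For $k=2$ one combines $\Gamma_{0}=1$, $\Gamma_{1}=3(2^{n}-1)$ and $\Gamma_{0}+\Gamma_{1}+\Gamma_{2}=2^{3n}$ to obtain $\Gamma_{2}^{\left[2\atop{\vdots \atop 2}\right]\times 2}=2^{3n}-3\cdot 2^{n}+2$. For $k=3$ two unknowns $\Gamma_{2},\Gamma_{3}$ remain; moving the now-explicit contributions of $\Gamma_{0}$ and $\Gamma_{1}$ to the right-hand sides, the plain sum and the $2^{-i}$-weighted sum form a $2\times 2$ system with coefficient matrix of rows $(1,1)$ and $(2^{-2},2^{-3})$, which is Vandermonde-type in the distinct nodes $2^{-2},2^{-3}$ and hence invertible; solving it gives the stated $\Gamma_{2}^{\left[2\atop{\vdots \atop 2}\right]\times 3}$ and $\Gamma_{3}^{\left[2\atop{\vdots \atop 2}\right]\times 3}$. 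For $k=4$ the three unknowns $\Gamma_{2},\Gamma_{3},\Gamma_{4}$ are determined by the $3\times 3$ system built from all three weighted-count identities, whose coefficient matrix has rows $(1,1,1)$, $(2^{-2},2^{-3},2^{-4})$ and $(2^{-4},2^{-6},2^{-8})$ --- again Vandermonde-type in the distinct nodes $2^{-2},2^{-3},2^{-4}$, hence nonsingular --- so the solution is unique and a direct computation yields the four formulas in the statement.

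The one place where there is genuine work is the $k=4$ case: one must expand the right-hand sides of the last two lines of \eqref{eq 5.1} at $k=4$, subtract the explicit contributions of $\Gamma_{0}^{\left[2\atop{\vdots \atop 2}\right]\times 4}=1$ and $\Gamma_{1}^{\left[2\atop{\vdots \atop 2}\right]\times 4}=3(2^{n}-1)$, and then solve the resulting $3\times 3$ linear system and collect powers of $2^{n}$. I expect no conceptual obstacle here --- the system triangularizes at once by elementary row operations once the powers of $2$ are cleared --- and the arithmetic, though a little lengthy, is routine. Two remarks on scope: the $2^{-2i}$-weighted relation comes from \eqref{eq 4.11} and is used under the hypothesis $k\leq 2n$ (so for $k=4$ one is tacitly assuming $n\geq 2$; for the remaining small values of $n$ the stated formulas still hold, vanishing precisely in the ranks that cannot occur since $\mathrm{rank}\,D^{\left[2\atop{\vdots \atop 2}\right]\times k}\leq\min(2n,k)$, and those cases are checked directly), and for $k\leq 3$ the $2^{-2i}$ relation is not needed in the derivation and may be used afterwards as an independent consistency check on the values found.
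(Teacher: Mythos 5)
Your proposal is correct and follows essentially the same route as the paper, whose proof of Lemma \ref{lem 5.2} is precisely to solve the linear system of Lemma \ref{lem 5.1} for $k=1,2,3,4$; your Vandermonde-invertibility remark and the scope comment on $k\leqslant 2n$ only make explicit what the paper leaves implicit.
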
  
  \begin{proof}
  Lemma \ref{lem 5.2} follows from Lemma \ref{lem 5.1}.
  
  \end{proof}
We shall need the following Lemma.\\
  \begin{lem}
\label{lem 5.3}
\begin{equation}
\label{eq 5.2}
   \Gamma_{2}^{\left[2\atop{\vdots \atop 2}\right]\times k}=   \begin{cases}
2^{k+1}-4 & \text{if  } n = 1,        \\
 3\cdot 2^{k+1}+30 & \text{if   } n=2,\\
 7\cdot 2^{k+1}+266 & \text{if   }  n = 3. 
    \end{cases}
   \end{equation} 
    
 \begin{equation}
   \label{eq 5.3}    
    \Gamma_{3}^{\left[2\atop{\vdots \atop 2}\right]\times k}=   \begin{cases}
0 & \text{if  } n = 1,        \\
 21\cdot 2^{k+1}-168 & \text{if   } n=2,\\
147\cdot 2^{k+1}+1344 & \text{if   }  n = 3. 
    \end{cases}    
  \end{equation}
      \end{lem}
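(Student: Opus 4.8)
The plan is to specialize the general system \eqref{eq 5.1} of Lemma \ref{lem 5.1} to the three values $n=1,2,3$ and solve for $\Gamma_2$ and $\Gamma_3$ as explicit functions of $k$. For fixed $n$, equation \eqref{eq 5.1} gives five relations among the $k+1$ unknowns $\Gamma_0,\ldots,\Gamma_k$; the first two pin down $\Gamma_0=1$ and $\Gamma_1=3(2^n-1)$ (for $k$ large enough), while the last three are the ``moment'' equations $\sum_i\Gamma_i 2^{-ji}=\text{(explicit)}$ for $j=0,1,2$. When $n=1$, the rank of a $2\times k$ persymmetric matrix is at most $2$, so only $\Gamma_0,\Gamma_1,\Gamma_2$ are nonzero, and the three moment equations determine them outright — in particular $\Gamma_3^{[\cdots]\times k}=0$ and $\Gamma_2^{[\cdots]\times k}=2^{k+1}-4$ drop straight out after substituting $n=1$ into the right-hand sides of \eqref{eq 5.1}. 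When $n=2$ or $n=3$, the rank can exceed $3$, so the three moment equations alone do not isolate $\Gamma_2$ and $\Gamma_3$; here I would instead argue that for these small $n$ the matrix $D^{[2\atop{\vdots\atop 2}]\times k}$ has a bounded number of rows ($4$ rows when $n=2$, $6$ rows when $n=3$), hence bounded rank, and carry out a direct count of the rank-$2$ and rank-$3$ configurations, exploiting the persymmetric (Hankel-type) structure within each block and the fact that the columns are shifts of a single Laurent tail.

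First I would treat $n=1$ completely from Lemma \ref{lem 5.1}: set $n=1$ in the five equations, note $\Gamma_i=0$ for $i\ge 3$, and read off $\Gamma_0=1$, $\Gamma_1=3$, $\Gamma_2=2^{(k+1)}-1-3=2^{k+1}-4$; one then checks the second and third moment equations are automatically consistent (a sanity check, not a new computation). Next, for $n=2$: the matrix has $4$ rows, so rank is at most $\min(4,k)$; for $k\ge 4$ the unknowns are $\Gamma_0,\ldots,\Gamma_4$ and I have the two linear constraints plus three moment equations, which is exactly five equations in five unknowns — so $\Gamma_2$ and $\Gamma_3$ are determined by linear algebra, and substituting $n=2$ into the right-hand sides of \eqref{eq 5.1} and solving yields $\Gamma_2=3\cdot 2^{k+1}+30$ and $\Gamma_3=21\cdot 2^{k+1}-168$. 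The case $n=3$ is the genuinely harder one: the matrix has $6$ rows, so for $k\ge 6$ there are six unknowns $\Gamma_0,\ldots,\Gamma_6$ but still only five equations from \eqref{eq 5.1}, so the system is underdetermined. To close the gap I would compute one further piece of information about $\Gamma_6^{[\cdots]\times 6}$ (or, equivalently, count the full-rank $6\times k$ persymmetric matrices of this special form directly) — this extra input, combined with the five equations of Lemma \ref{lem 5.1} specialized to $n=3$, then determines $\Gamma_2=7\cdot 2^{k+1}+266$ and $\Gamma_3=147\cdot 2^{k+1}+1344$.

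The main obstacle is precisely the $n=3$ under-determinacy: Lemma \ref{lem 5.1} supplies only five scalar relations regardless of $n$, whereas for $n=3$ there are genuinely six free rank-values once $k\ge 6$, so some genuinely new enumeration — not merely algebra on the earlier identities — is needed. I expect the cleanest route is to observe that, because each $2n\times k$ matrix here is built from $n$ independent $2\times k$ Hankel blocks with columns that are consecutive shifts, the rank of the whole matrix is governed by the degrees of a small number of auxiliary polynomials (the ``recurrence polynomials'' of each Hankel block), reducing the count to a finite bookkeeping over the possible degree patterns; for $n=3$ this bookkeeping is still finite and explicit, and pins down the last unknown. Once that one count is in hand, everything else is linear back-substitution into \eqref{eq 5.1}, and the stated formulas for $\Gamma_2$ and $\Gamma_3$ follow.
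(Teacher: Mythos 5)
Your plan splits into three cases, and only two of them actually close. For $n=1$ the argument is fine: the matrix has two rows, so $\Gamma_i=0$ for $i\geq 3$ and \eqref{eq 5.1} gives $\Gamma_2=2^{k+1}-1-3=2^{k+1}-4$ at once. For $n=2$ the counting also works: once $\Gamma_0$ and $\Gamma_1$ are fixed, the three moment identities of Lemma \ref{lem 5.1} are a Vandermonde-type system in the at most three remaining unknowns $\Gamma_2,\Gamma_3,\Gamma_4$, so they are uniquely determined for every $k$, and the stated formulas follow. Note that this is already a different route from the paper, whose proof of Lemma \ref{lem 5.3} is simply a citation: the rank distributions of single, double and triple persymmetric matrices are taken from Daykin [3] and Cherly [4], [5], [6], and \eqref{eq 5.2}--\eqref{eq 5.3} are read off from those known results.

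The genuine gap is the case $n=3$, which is exactly the case the external references are needed for. First, a miscount: for $k\geq 6$ the unknowns are $\Gamma_0,\dots,\Gamma_6$, i.e.\ \emph{seven}, not six; after fixing $\Gamma_0,\Gamma_1$ you have five unknowns and only the three moment equations, so the system is underdetermined by two, and adding a single extra count (your proposed enumeration of the full-rank matrices, i.e.\ of $\Gamma_6$) still leaves a one-parameter indeterminacy; already for $k=5$ the system is short by one equation. Second, the ingredient you propose to supply the missing information -- ``finite bookkeeping over the recurrence polynomials of the three Hankel blocks'' -- is not carried out, and it is not a small omission: determining how the ranks of the three $2\times k$ persymmetric blocks interact to give the rank of the stacked $6\times k$ matrix is precisely the substantial enumeration performed in the cited work on triple persymmetric matrices, which this paper invokes rather than reproves. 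As written, therefore, the proposal establishes \eqref{eq 5.2}--\eqref{eq 5.3} for $n=1$ and $n=2$ but not for $n=3$, so it does not prove the Lemma.
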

  \begin{proof}
  \end{proof}
  
  Lemma \ref{lem 5.3} follows from Daykin [3], Cherly [4],[5]  and  [6].

       \begin{lem}
\label{lem 5.4}
We postulate that :\\
\begin{equation}
\label{eq 5.4}
 \Gamma_{2}^{\left[2\atop{\vdots \atop 2}\right]\times k} = 7\cdot2^{2n}+(2^{k+1}-25) \cdot 2^{n}-2^{k+1}+18 \quad \text{for} \quad k\geqslant 3
      \end{equation}

\end{lem}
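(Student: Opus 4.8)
The plan is to turn $\Gamma_{2}^{\left[2\atop{\vdots \atop 2}\right]\times k}$ into a count over subspaces of $\mathbb{F}_{2}^{k}$ and to evaluate it. By the set-up of Section \ref{sec 2}, $\Gamma_{2}^{\left[2\atop{\vdots \atop 2}\right]\times k}$ is the number of $n$-tuples $(B_{1},\ldots,B_{n})$ of $2\times k$ persymmetric matrices over $\mathbb{F}_{2}$ whose stacked $2n\times k$ matrix has rank exactly $2$; writing $\mathrm{rs}(B)\subseteq\mathbb{F}_{2}^{k}$ for the row space of a block and $N(W)=\#\{B:\mathrm{rs}(B)\subseteq W\}$, this is the number of tuples with $\dim\bigl(\mathrm{rs}(B_{1})+\cdots+\mathrm{rs}(B_{n})\bigr)=2$. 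Grouping the tuples by the $2$-dimensional subspace $V$ they span and applying inclusion--exclusion over the three lines $\ell_{1},\ell_{2},\ell_{3}$ of $V$ (any two of which meet in $0$) gives
\begin{equation*}
\Gamma_{2}^{\left[2\atop{\vdots \atop 2}\right]\times k}
=\sum_{\dim V=2}\Bigl(N(V)^{\,n}-\sum_{j=1}^{3}N(\ell_{j})^{\,n}+2\Bigr),
\end{equation*}
so it remains to find $N(\ell)$ for lines and $N(V)$ for planes.

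For a line, a direct inspection (this is the content behind \eqref{eq 2.5}: there are exactly three rank-one $2\times k$ persymmetric matrices) shows $N(\ell)=2$ precisely for $\ell\in\{\langle e_{1}\rangle,\langle e_{k}\rangle,\langle(1,\ldots,1)\rangle\}$ and $N(\ell)=1$ otherwise; hence $\sum_{j}N(\ell_{j})^{\,n}=3+s(V)(2^{n}-1)$, where $s(V)$ counts those three special lines lying in $V$ and $\sum_{\dim V=2}s(V)=3(2^{k-1}-1)$. For a plane $V$, the two rows of a persymmetric block are $r_{1}=(\alpha_{1},\ldots,\alpha_{k})$ and $r_{2}=(\alpha_{2},\ldots,\alpha_{k+1})$, so $N(V)$ equals the number of pairs $(r_{1},r_{2})\in V\times V$ for which $r_{1}$ with its first coordinate deleted equals $r_{2}$ with its last coordinate deleted; writing $S$ for the nilpotent left shift on $\mathbb{F}_{2}^{k}$, one finds $N(V)=\bigl|V\cap S^{-1}(V+\langle e_{k}\rangle)\bigr|$ when $e_{k}\notin V$ and $N(V)=2\bigl|V\cap S^{-1}(V)\bigr|$ when $e_{k}\in V$. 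In particular $N(V)$ is a power of $2$, and $N(V)=8$ would force $e_{k}\in V$ and $SV\subseteq V$; but the only $2$-dimensional $S$-invariant subspace is $\langle e_{1},e_{2}\rangle=\ker S^{2}$, which contains $e_{k}$ only when $k\leq 2$. This is exactly where $k\geq 3$ is used, and for $k\geq 3$ it forces $N(V)\in\{1,2,4\}$ for every plane $V$. (For $k=2$ the plane $\mathbb{F}_{2}^{2}$ has $N=8$, which produces the $2^{3n}$ term in the $k=2$ line of Lemma \ref{lem 5.2}.)

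It then suffices to count $m_{4}=\#\{V:\dim V=2,\ N(V)=4\}$, since $m_{2}=\#\{V:N(V)=2\}$ is forced afterwards by the identity $\sum_{\dim V=2}N(V)=N_{2}+N_{1}(2^{k-1}-1)+N_{0}\cdot\frac{(2^{k}-1)(2^{k-1}-1)}{3}$ with $(N_{0},N_{1},N_{2})=(1,3,2^{k+1}-4)$ (from \eqref{eq 2.5}), which gives $m_{2}=7\cdot2^{k-1}-28$. To compute $m_{4}$ I would split according to whether $e_{1},e_{k}\in V$. If both do, then $V=\langle e_{1},e_{k}\rangle$ and $N(V)=4$ for $k\geq3$. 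If only $e_{1}\in V$, then $N(V)=4$ is equivalent to a containment of the images of $V$ under deleting the first, resp.\ last, coordinate, which a short computation shows holds only for $V\in\{\langle e_{1},e_{2}\rangle,\langle e_{1},(1,\ldots,1)\rangle\}$; by the column-reversal symmetry of persymmetric matrices (swapping $e_{1}\leftrightarrow e_{k}$ and preserving $N$) the case ``only $e_{k}\in V$'' likewise contributes two planes. If neither $e_{1}$ nor $e_{k}$ lies in $V$, then $N(V)=4$ means those two images coincide, which is equivalent to $V$ being the row space of a $2\times k$ matrix whose $j$-th column is $g^{\,k-j}c$ for a fixed $g\in GL_{2}(\mathbb{F}_{2})$ and $c\in\mathbb{F}_{2}^{2}$ with spanning column set; counting the $GL_{2}(\mathbb{F}_{2})$-orbits of such admissible pairs $(g,c)$ yields exactly two planes (one with $g$ of order $2$, one with $g$ of order $3$). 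Altogether $m_{4}=1+2+2+2=7$, independently of $k$. Feeding $m_{4}=7$, $m_{2}=7\cdot2^{k-1}-28$, $\sum_{V}s(V)=3(2^{k-1}-1)$ and the number $\frac{(2^{k}-1)(2^{k-1}-1)}{3}$ of planes into the displayed formula, all cross terms cancel and leave $7(2^{2n}-1)+(2^{k+1}-25)(2^{n}-1)$, i.e.\ \eqref{eq 5.4}.

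The one genuinely non-formal step is the last part of the count of $m_{4}$: showing that the number of planes $V$ with $N(V)=4$ equals the constant $7$ and does not grow with $k$. The planes containing $e_{1}$ or $e_{k}$ contribute a bounded number in an obvious way, so the real work is the $GL_{2}(\mathbb{F}_{2})$-orbit count in the generic case; once that is in place, everything else (the subspace interpretation of $\Gamma_{2}$, the inclusion--exclusion, the line values, the shape $N(V)\in\{1,2,4\}$, and the recovery of $m_{2}$ from $m_{4}$) is routine, and the resulting formula can be cross-checked against the cases $k\leq4$ of Lemma \ref{lem 5.2} and the cases $n\leq3$ of Lemma \ref{lem 5.3}.
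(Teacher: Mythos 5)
Your argument is correct, but it is a genuinely different route from the paper's. The paper does not actually derive \eqref{eq 5.4}: consistent with the word ``postulate'', it assumes from the $k=3,4$ cases of Lemma \ref{lem 5.2} that $\Gamma_{2}^{\left[2\atop{\vdots \atop 2}\right]\times k}$ has the shape $7\cdot 2^{2n}+a(k)\cdot 2^{n}+b(k)$, and then determines $a(k),b(k)$ by solving the linear system \eqref{eq 5.6} obtained from the $n=1,2,3$ values quoted in Lemma \ref{lem 5.3} (which rest on Daykin and earlier work); it is an interpolation under an ansatz rather than a proof. You instead prove the formula directly: grouping $n$-tuples of persymmetric $2\times k$ blocks by the plane they span, inclusion--exclusion over the three lines of a plane, the values $N(\ell)\in\{1,2\}$ coming from the three rank-one blocks behind \eqref{eq 2.5}, the shift description of $N(V)$, and the classification of planes with $N(V)=4$. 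I checked the key quantitative claims: $N(V)\in\{1,2,4\}$ for $k\geq 3$ (with the $k\leq 2$ exception $\langle e_{1},e_{2}\rangle\ni e_{k}$, $N=8$, explaining the $2^{3n}$ term at $k=2$), $m_{4}=7$ (the planes $\langle e_{1},e_{k}\rangle$, $\langle e_{1},e_{2}\rangle$, $\langle e_{1},\mathbf{1}\rangle$, $\langle e_{k-1},e_{k}\rangle$, $\langle e_{k},\mathbf{1}\rangle$, the period-two plane, and the period-three sum-zero plane), $3m_{4}+m_{2}=7\cdot 2^{k-1}-7$ from the double count, hence $m_{2}=7\cdot 2^{k-1}-28$, and the final assembly does give $7\cdot 2^{2n}+(2^{k+1}-25)2^{n}-2^{k+1}+18$. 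The one step you state rather than fully write out is the generic orbit count: the relation $c_{j+1}=g\,c_{j}$ on columns forces $g$ of order $2$ (all admissible $c$ giving the single alternating plane) or order $3$ (all $c$ giving the single plane of period-three vectors with zero coordinate sum), so the ``two planes'' claim is true but deserves that short explicit enumeration. What the two approaches buy: the paper's is a few lines and leans on external data, but leaves the lemma conditional on the postulated shape; yours is longer but unconditional, upgrades the postulate to a theorem, makes the role of the hypothesis $k\geq 3$ transparent, and is independently cross-checked by Lemmas \ref{lem 5.2} and \ref{lem 5.3}.
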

  \begin{proof}
  From the expressions of $ \Gamma_{2}^{\left[2\atop{\vdots \atop 2}\right]\times k} $ for k=3 and k=4 in Lemma \ref{lem 5.2} we assume that 
  $ \Gamma_{2}^{\left[2\atop{\vdots \atop 2}\right]\times k} $ can be written in the form :\\
  \begin{equation}
\label{eq 5.5}
 7\cdot2^{2n}+a(k)\cdot2^{n}+b(k) 
\end{equation}
Combining \eqref{eq 5.5} and \eqref{eq 5.2} for n=1,n=2 and n=3 we obtain : \\
 \begin{equation}
  \label{eq 5.6}
 \begin{cases} 
 \displaystyle 2a(k)+b(k)=2^{k+1}-32 \\
 \displaystyle  4a(k)+b(k)=3\cdot 2^{k+1}-82 \\
 \displaystyle    8a(k)+b(k)=7\cdot 2^{k+1}-182 \\
 \end{cases}
    \end{equation}
From \eqref{eq 5.6} we deduce :\\
$a(k)=2^{k+1}-25 $ and  $b(k)=-2^{k+1}+18.$
  \end{proof}
  
  \begin{lem}
  \label{lem 5.5}
We postulate that :\\
\begin{equation}
\label{eq 5.7}
 \Gamma_{3}^{\left[2\atop{\vdots \atop 2}\right]\times k} = 15\cdot2^{3n} + (7\cdot2^k-133)\cdot2^{2n}+ (294-21\cdot 2^k) \cdot 2^{n}   -176+14\cdot2^k \quad \text{for} \quad k\geqslant 4
      \end{equation}
\end{lem}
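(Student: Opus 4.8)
The plan is to follow verbatim the scheme of Lemma~\ref{lem 5.4}. First I would read off from the closed form of $\Gamma_{3}^{\left[2\atop{\vdots \atop 2}\right]\times 4}$ in Lemma~\ref{lem 5.2} that its leading term is $15\cdot 2^{3n}$, with a coefficient not depending on $k$. Guided by this (and by the general shape of the counting functions in this section), I would postulate that for every $k\geqslant 4$ one may write
$$\Gamma_{3}^{\left[2\atop{\vdots \atop 2}\right]\times k}=15\cdot 2^{3n}+c(k)\cdot 2^{2n}+d(k)\cdot 2^{n}+e(k)$$
for suitable functions $c(k),d(k),e(k)$. Note that this ansatz already fails for $k=3$, where Lemma~\ref{lem 5.2} exhibits a $2^{4n}$ term, so the hypothesis $k\geqslant 4$ is genuinely used.

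Next I would determine $c(k),d(k),e(k)$ from the only explicit data available, namely the three values of $\Gamma_{3}^{\left[2\atop{\vdots \atop 2}\right]\times k}$ at $n=1,2,3$ provided by \eqref{eq 5.3}. Setting $n=1,2,3$ in the ansatz and moving the now-known quantities $15\cdot 2^{3}$, $15\cdot 2^{6}$, $15\cdot 2^{9}$ to the right produces a $3\times 3$ linear system in $c(k),d(k),e(k)$ whose coefficient matrix is Vandermonde in the distinct nodes $2,4,8$ (hence invertible) and whose right-hand sides are affine in $2^{k+1}$. Solving it, for instance by subtracting consecutive equations twice to isolate $c(k)$ and then back-substituting, I expect to obtain $c(k)=7\cdot 2^{k}-133$, $d(k)=294-21\cdot 2^{k}$ and $e(k)=-176+14\cdot 2^{k}$; inserting these into the ansatz gives precisely \eqref{eq 5.7}.

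The only step that is not purely mechanical is the legitimacy of the ansatz: one must know, or grant, that for $k\geqslant 4$ the quantity $\Gamma_{3}^{\left[2\atop{\vdots \atop 2}\right]\times k}$ is a polynomial in $2^{n}$ of degree three with leading coefficient the constant $15$. In accordance with the term ``postulate'' in the statement this is adopted as a working hypothesis; as a consistency check I would verify that the resulting expression reproduces the $k=4$ line of Lemma~\ref{lem 5.2}, which it does, and observe that an unconditional proof along these lines would require just one further independent datum, namely the value of $\Gamma_{3}^{\left[2\atop{\vdots \atop 2}\right]\times k}$ at $n=4$. Everything else is elementary integer linear algebra.
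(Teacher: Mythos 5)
Your proposal is correct and is essentially identical to the paper's own argument: the paper likewise takes the ansatz $15\cdot2^{3n}+a(k)\cdot2^{2n}+b(k)\cdot2^{n}+c(k)$ motivated by the $k=4$ case of Lemma \ref{lem 5.2}, plugs in the $n=1,2,3$ values from \eqref{eq 5.3} to get the linear system \eqref{eq 5.9}, and solves it to obtain $a(k)=7\cdot2^{k}-133$, $b(k)=294-21\cdot2^{k}$, $c(k)=-176+14\cdot2^{k}$. Your added remarks (Vandermonde invertibility, the failure of the ansatz at $k=3$, and the $k=4$ consistency check) are sound but not part of the paper's proof, which, like yours, rests on the postulated polynomial form rather than an unconditional derivation.
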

  \begin{proof}
  From the expression of $ \Gamma_{3}^{\left[2\atop{\vdots \atop 2}\right]\times k} $ for k=4 in Lemma \ref{lem 5.2} we assume that 
  $ \Gamma_{3}^{\left[2\atop{\vdots \atop 2}\right]\times k} $ can be written in the form :\\
  \begin{equation}
\label{eq 5.8}
15\cdot2^{3n} + a(k)\cdot2^{2n}+b(k)\cdot2^{n}+c(k) 
\end{equation}
Combining \eqref{eq 5.8} and \eqref{eq 5.3} for n=1,n=2 and n=3 we obtain : \\
 \begin{equation}
  \label{eq 5.9}
 \begin{cases} 
 \displaystyle 4a(k)+2b(k)+c(k) = -120 \\
 \displaystyle  16a(k)+4b(k)+c(k) =21\cdot 2^{k+1}-1128 \\
  \displaystyle    64a(k)+8b(k)+c(k) =147\cdot 2^{k+1}-6336 \\
 \end{cases}
    \end{equation}
From \eqref{eq 5.9} we deduce :\\
$a(k)=7\cdot2^k -133,\; b(k)=294-21\cdot2^k$ and $c(k)= -176+14\cdot2^k$
  \end{proof}

    \begin{lem}
\label{lem 5.6}
\textbf{The case k=5}
  \begin{equation}
  \label{eq 5.10}
     \Gamma_{i}^{\left[2\atop{\vdots \atop 2}\right]\times 5}=   \begin{cases}
1 & \text{if  } i = 0,        \\
 (2^{n}-1)\cdot 3 & \text{if   } i=1,\\
7\cdot2^{2n}+39 \cdot 2^{n}-46  & \text{if   }  i = 2,  \\
 15\cdot 2^{3n}+91\cdot 2^{2n}-189\cdot 2^{n+1}+272   & \text{if   }  i = 3,  \\
31\cdot 2^{4n} -45\cdot 2^{3n}-161\cdot 2^{2n+1}+51\cdot2^{n+4}-480 & \text{if   }  i=4, \\                                                                                                
2^{6n} - 31\cdot 2^{4n} +15\cdot 2^{3n+1}+7\cdot 2^{2n+5}-15\cdot2^{n+5}+256   & \text{if   }  i=5. 
  \end{cases}    
 \end{equation}

\textbf{The case k=6}

  \begin{equation}
  \label{eq 5.11}
    \Gamma_{i}^{\left[2\atop{\vdots \atop 2}\right]\times 6}=   \begin{cases}
1 & \text{if  } i = 0,        \\
 (2^{n}-1)\cdot 3 & \text{if   } i=1,\\
7\cdot2^{2n}+103 \cdot 2^{n}-110  & \text{if   }  i = 2,  \\
 15\cdot 2^{3n}+315\cdot 2^{2n}-1050\cdot 2^{n}+720   & \text{if   }  i = 3,  \\
31\cdot 2^{4n} +515\cdot 2^{3n}-2450 \cdot 2^{2n}+3280 \cdot2^{n}-1376  & \text{if   }  i=4, \\                                                                                                
   63 \cdot 2^{5n}-93 \cdot2^{4n} -1650 \cdot2^{3n}+5040 \cdot 2^{2n}-4128 \cdot 2^n +768 & \text{if   }  i=5, \\
      2^{7n}-63 \cdot 2^{5n}+62\cdot 2^{4n}+1120\cdot 2^{3n}-2912 \cdot 2^{2n}+1792 \cdot2^{n}   & \text{if   }  i=6. 
  \end{cases}    
  \end{equation}

   \end{lem}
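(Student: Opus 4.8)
The plan is to determine, for each fixed $k\in\{5,6\}$, the whole list $\Gamma_{i}^{\left[2\atop{\vdots \atop 2}\right]\times k}$ with $0\le i\le k$ by treating these numbers as unknowns and pinning them down from the linear relations already at hand. For $k=5$ I would start from Lemma~\ref{lem 5.1}: its first two lines give $\Gamma_{0}^{\left[2\atop{\vdots \atop 2}\right]\times 5}=1$ and $\Gamma_{1}^{\left[2\atop{\vdots \atop 2}\right]\times 5}=3(2^{n}-1)$ immediately, and Lemma~\ref{lem 5.4} (valid since $5\ge 3$) supplies $\Gamma_{2}^{\left[2\atop{\vdots \atop 2}\right]\times 5}=7\cdot2^{2n}+39\cdot2^{n}-46$. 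What then remains is to find the three quantities $\Gamma_{3}^{\left[2\atop{\vdots \atop 2}\right]\times 5},\Gamma_{4}^{\left[2\atop{\vdots \atop 2}\right]\times 5},\Gamma_{5}^{\left[2\atop{\vdots \atop 2}\right]\times 5}$, and the last three lines of \eqref{eq 5.1} --- the plain sum, the sum weighted by $2^{-i}$, and the sum weighted by $2^{-2i}$ --- provide exactly three linear equations in them, once the known contributions of $\Gamma_{0},\Gamma_{1},\Gamma_{2}$ have been moved to the right-hand side. The coefficient matrix of this $3\times3$ system has rows $(1,1,1)$, $(2^{-3},2^{-4},2^{-5})$, $(2^{-6},2^{-8},2^{-10})$, a nonsingular Vandermonde-type matrix; inverting it and inserting the (explicit, polynomial in $2^{n}$) right-hand sides produces the closed forms listed in \eqref{eq 5.10}.

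For $k=6$ the argument runs along the same lines, but now four of the seven unknowns must be fed in before the system closes. So besides $\Gamma_{0}^{\left[2\atop{\vdots \atop 2}\right]\times 6}=1$ and $\Gamma_{1}^{\left[2\atop{\vdots \atop 2}\right]\times 6}=3(2^{n}-1)$ from Lemma~\ref{lem 5.1}, I would invoke Lemma~\ref{lem 5.4} at $k=6$ for $\Gamma_{2}^{\left[2\atop{\vdots \atop 2}\right]\times 6}=7\cdot2^{2n}+103\cdot2^{n}-110$ and Lemma~\ref{lem 5.5} at $k=6$ for $\Gamma_{3}^{\left[2\atop{\vdots \atop 2}\right]\times 6}=15\cdot2^{3n}+315\cdot2^{2n}-1050\cdot2^{n}+720$. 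With $\Gamma_{0},\dots,\Gamma_{3}$ in hand, the same three summation identities of \eqref{eq 5.1} become a $3\times3$ linear system for $\Gamma_{4}^{\left[2\atop{\vdots \atop 2}\right]\times 6},\Gamma_{5}^{\left[2\atop{\vdots \atop 2}\right]\times 6},\Gamma_{6}^{\left[2\atop{\vdots \atop 2}\right]\times 6}$, whose right-hand sides are the values of \eqref{eq 5.1} diminished by the contributions of the four known $\Gamma_{i}$; solving it yields \eqref{eq 5.11}.

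The only genuine labor is arithmetic: with $k$ fixed each right-hand side is a Laurent polynomial in $2^{n}$, and one must collect the coefficient of every power $2^{jn}$ carefully when subtracting the known terms and then inverting the small Vandermonde system --- the bulkiest ingredient being the $2^{-2i}$-weighted identity, the fifth line of \eqref{eq 5.1}, which comes from \eqref{eq 4.11}. I expect no conceptual obstacle, only the usual bookkeeping risk of sign errors and misplaced powers of two. As a safeguard I would check that the $\Gamma_{3}^{\left[2\atop{\vdots \atop 2}\right]\times 5}$ emerging from the $k=5$ system agrees with Lemma~\ref{lem 5.5} evaluated at $k=5$, and that substituting the final answers back into $\sum_{i=0}^{k}\Gamma_{i}^{\left[2\atop{\vdots \atop 2}\right]\times k}$ recovers $2^{(k+1)n}$; both checks confirm \eqref{eq 5.10} and \eqref{eq 5.11}.
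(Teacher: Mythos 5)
Your proposal is correct and matches the paper's own (one-line) proof, which derives the $k=5$ and $k=6$ tables by combining the linear relations of Lemma~\ref{lem 5.1} with the closed forms \eqref{eq 5.4} and \eqref{eq 5.7} for $\Gamma_{2}$ and $\Gamma_{3}$; you simply spell out the resulting small linear systems explicitly. The only cosmetic difference is that for $k=5$ you recover $\Gamma_{3}$ from the $3\times3$ system and use Lemma~\ref{lem 5.5} only as a consistency check, whereas the paper cites \eqref{eq 5.7} directly as an input.
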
 
\begin{proof}
  Lemma \ref{lem 5.6} follows from Lemma \ref{lem 5.1},  \eqref{eq 5.4} and \eqref{eq 5.7}
  \end{proof}
 \begin{example} 
   We recover from \eqref{eq 5.11} for n=3 the formula given in Cherly [7] for the number $ \Gamma_{i}^{\left[2\atop{2 \atop2}\right]\times 6}$ of the  rank i triple persymmetric $6\times 6$ matrix over $\mathbb{F}_{2}$  of the below form
    \begin{displaymath}
   \left (  \begin{array} {cccccc}
\alpha  _{1}^{(1)} & \alpha  _{2}^{(1)}  &   \alpha_{3}^{(1)} &   \alpha_{4}^{(1)} &   \alpha_{5}^{(1)}  &   \alpha_{6}^{(1)} \\
\alpha  _{2}^{(1)} & \alpha  _{3}^{(1)}  &   \alpha_{4}^{(1)} &   \alpha_{5}^{(1)} &   \alpha_{6}^{(1)}  &   \alpha_{7}^{(1)} \\   
 \hline \\
\alpha  _{1}^{(2)} & \alpha  _{2}^{(2)}  &   \alpha_{3}^{(2)} &   \alpha_{4}^{(2)} &   \alpha_{5}^{(2)}  &   \alpha_{6}^{(2)} \\   
 \alpha  _{2}^{(2)} & \alpha  _{3}^{(2)}  &   \alpha_{4}^{(2)} &   \alpha_{5}^{(2)}&   \alpha_{6}^{(2)}  &   \alpha_{7}^{(2)} \\   
  \hline\\
\alpha  _{1}^{(3)} & \alpha  _{2}^{(3)}  &   \alpha_{3}^{(3)}  &   \alpha_{4}^{(3)} &   \alpha_{5}^{(3)} &   \alpha_{6}^{(3)} \\   
 \alpha  _{2}^{(3)} & \alpha  _{3}^{(3)}  &   \alpha_{4}^{(3)}&   \alpha_{5}^{(3)} &   \alpha_{6}^{(3)}  &   \alpha_{7}^{(3)}    
  \end{array} \right )  
\end{displaymath} 
that is :\\
\[ \Gamma_{i}^{\left[2\atop{ 2\atop 2} \right]\times k}=
 \begin{cases}
1  &\text{if  }  i = 0 \\
21   &\text{if  }  i = 1 \\
1162  & \text{if   } i = 2 \\
20160   & \text{if   } i = 3 \\
258720 & \text{if   } i = 4 \\
1128960   & \text{if   } i = 5 \\
688128  & \text{if  } i = 6.
\end{cases}
\]

  \end{example}
  
  For other related articles concerning persymmetric matrices over the finite field with two elements see Cherly    [8]

  \newpage


\begin{thebibliography}{99}
\bibitem{Landsberg}Landsberg, G {Ueber eine Anzahlbestimmung und eine damit zusammenhangende Reihe},
 {J. reine angew.Math}, {\bf 111}(1893),87-88. 
 \bibitem{Fisher and Alexander}Fisher,S.D and Alexander M.N. {Matrices over a finite field}\\
 {Amer.Math.Monthly 73}(1966), 639-641
  \bibitem{Daykin}  Daykin David E,  {Distribution of Bordered Persymmetric Matrices in a finite field}
  {J. reine angew. Math}, {\bf 203}(1960) ,47-54
   \bibitem{Cherly} Cherly, Jorgen. \\ {Exponential sums and rank of persymmetric  matrices over  $\mathbf{F}_2 $  }\\
{arXiv : 0711.1306, 46 pp} 
  \bibitem{Cherly} Cherly, Jorgen. \\ {Exponential sums and rank of  double  persymmetric  matrices over  $\mathbf{F}_2 $  }\\
{arXiv : 0711.1937, 160 pp} 
  \bibitem{Cherly} Cherly, Jorgen. \\ {Exponential sums and rank of  triple  persymmetric  matrices over  $\mathbf{F}_2 $  }\\
{arXiv : 0803.1398, 233 pp} 
  \bibitem{Cherly} Cherly, Jorgen. \\ {Results about   persymmetric  matrices over  $\mathbf{F}_2 $ and related exponentials sums }\\
   {arXiv : 0803.2412v2, 32 pp} 
   \bibitem{Cherly} Cherly, Jorgen. \\ {Polynomial equations and rank of  matrices over  $\mathbf{F}_2 $ related  to persymmetric matrices}\\
   {arXiv : 0909.0438v1, 33 pp}   
      \bibitem{Cherly} Cherly, Jorgen. \\ {On a conjecture regarding enumeration of n-times persymmetric matrices over $\mathbf{F}_2 $ by rank}\\
   {arXiv : 0909.4030, 21 pp}   
      \bibitem{Cherly} Cherly, Jorgen. \\ { On a conjecture concerning the fraction  of  invertible  m-times  Persymmetric  Matrices over $\mathbb{F}_{2} $}\\ 
    {arXiv : 1008.4048v1, 11 pp}   
    \end{thebibliography}
  \end{document}